\documentclass[graybox]{svmult}

\usepackage{hyperref}
\usepackage{type1cm}       
\usepackage{makeidx} 
\usepackage{graphicx}
\usepackage{multicol} 
\usepackage[bottom]{footmisc}
\usepackage{newtxtext}        
\usepackage[varvw]{newtxmath}
\usepackage{mathtools}
\usepackage{enumitem}
\usepackage[utf8]{inputenc}
\usepackage[british]{babel}
\def\namedlabel#1#2{\begingroup
    #2%
    \def\@currentlabel{#2}%
    \phantomsection\label{#1}\endgroup
}

\def\div{\operatorname{div}}

\def\RR{\mathbb{R}}

\def\LL{\mathbf{L}}

\def\bsig{\boldsymbol{\sigma}}
\def\Th{\mathcal{T}_h}
\def\Itau{\mathcal{I}_\tau}

\def\la{\langle}
\def\ra{\rangle}

\def\Th{\mathcal{T}_h}
\def\Vh{\mathcal{V}_h}
\def\Qh{\mathcal{Q}_h}
\def\Xh{\mathcal{X}_h}

\DeclarePairedDelimiter{\norm}{\|}{\|}
\DeclarePairedDelimiter{\snorm}{|}{|}

\def\u{\mathbf{u}}
\def\vv{\mathbf{v}}
\def\Du{\mathrm{D}\u}

\def\dt{\partial_t}

\def\ddt{\tfrac{d}{dt}}

\usepackage{xcolor}
\def\changesone#1{#1}
\def\changestwo#1{#1}

\makeindex            


\begin{document}

\title*{Structure-Preserving Approximation For The Non-Isothermal Cahn-Hilliard-Navier-Stokes System}
\titlerunning{Approximation For The Non-Isothermal Cahn-Hilliard-Navier-Stokes System}

\author{Aaron Brunk\orcidID{0000-0003-4987-2398} and \\Dennis Schumann\orcidID{ 0009-0006-4853-9947}}
\institute{Aaron Brunk \at Institute of Mathematics Johannes Gutenberg University, Staudinger Weg 9, 55128 Mainz, \\\email{abrunk@uni-mainz.de}
\and Dennis Schumann \at Institute of Mathematics Johannes Gutenberg University, Staudinger Weg 9, 55128 Mainz, \\\email{deschuma@uni-mainz.de}}
\maketitle

\abstract*{In this work we propose and analyse a structure-preserving approximation of the non-isothermal Cahn-Hilliard-Navier-Stokes system using conforming finite elements in space and implicit time discretisation with convex-concave splitting. The system is first reformulated into a variational form that reveals the equations' structure and is then used in the subsequent approximation. }

\abstract{In this work we propose and analyse a structure-preserving approximation of the non-isothermal Cahn-Hilliard-Navier-Stokes system using conforming finite elements in space and implicit time discretisation with convex-concave splitting. The system is first reformulated into a variational form that reveals the equations' structure and is then used in the subsequent approximation. }

\vspace{-1em}
\section{Introduction}
\vspace{-1em}

The non-isothermal Cahn-Hilliard-Navier-Stokes (CHNST) system has increasingly gained attention for the investigation of different phenomena ranging from two-phase flows to fluid-phase-coupled interactions both of which have relevant importance in scientific and industrial applications, such as additive manufacturing and inkjet printing \cite{van2017binary, Yang2020, dadvand2021advected}. For instance, the modelling and simulation of powder bed fusion additive manufacturing (PBF-AM) process employ the non-isothermal CHNST to depict coupled processes in PBF-AM such as fluid-phase interaction, melt flow dynamics and heat transfer \cite{Yang2020}.
For the CHNST system, we consider the following system of partial differential equations
\begin{align}
 \dt\phi &+ \u\cdot\nabla\phi - \div(\LL_{11}\nabla\mu + \LL_{12}\nabla\theta) = 0, \qquad \mu = -\gamma\Delta\phi + \partial_\phi \Psi(\phi,\theta), \label{eq:ac1}\\
 \dt e &+ \u\cdot\nabla e - \div(\LL_{12}\nabla\mu- \LL_{22}\nabla \theta ) - (\eta\Du- \bsig):\nabla\u = 0, \label{eq:ac2} \\
 \dt\u &+ (\u\cdot\nabla)\u - \div(\eta\Du - p\mathbf{I} - \bsig) = 0, \qquad \div(\u)=0. \label{eq:ac3}
\end{align}
The above system is complemented by periodic boundary conditions and initial conditions. In this context $\phi$ denotes a conserved phase-field variable, $\u$ is the flow velocity, $\theta$ is the inverse temperature and $e\equiv e(\phi,\theta)$ is the internal energy. To close the system we consider the Helmholtz free energy density and the Korteweg stress given by 
\begin{align*}
\tilde \Psi(\phi,\theta) :=\Psi(\phi,\theta) + \tfrac{\gamma}{2}\snorm{\nabla\phi}^2, \qquad \bsig:=\tfrac{\gamma}{\theta}\nabla\phi\otimes\nabla\phi.
\end{align*}
From this, one can compute the internal energy and entropy according to \cite{Alt1990,Alt1992}, i.e. $e=\partial_\theta \tilde\Psi$ and $\hat s(e(\theta,\phi),\phi) = s(\theta,\phi)=\theta e-\tilde\Psi$. \changesone{Note that $\Psi(\phi,\theta)$ denotes the gradient-free part of the Helmholtz free energy density. A generic choice can be found in Section 3.}

The above system is already formulated in the inverse temperature $\theta$ which is more convenient for a finite element discretisation. The transformation from the original temperature to the inverse temperature can be found by \changestwo{Pawlow and Alt \cite{Alt1992}, and \cite{brunk2023variational} for more} complicated non-isothermal Cahn-Hilliard-Allen-Cahn system. The general derivation of non-isothermal phase-field models from thermodynamic sound principles is more delicate and we refer to \cite{Charach1998,Fabrizio2006,Pawlow2016} for several different modelling approaches.

For this type of systems \changestwo{in \cite{Guo_2015,Sun2020} the authors consider} finite differences in space together with an Energy Quadratisation (EQ) ansatz. This renders the driving functional quadratic, which facilitates standard time discretisation methods. However, the price of such an ansatz is the relaxation of entropy, which in some cases results in a \emph{drift-off} from the original entropy. 
The isothermal version of the above system, i.e. the Cahn-Hilliard-Navier-Stokes system, is called \emph{model H} introduced by Hohenberg and Halperin \cite{Hohenberg}. This model is employed in the context of two-phase flow, especially in phase separation. In this case, there are many well-established techniques to solve the system with finite elements \cite{Diegel2017,Feng06,Han2015} and finite differences \cite{Gong2018,ChenZhao2020,LiShen2022} in space.

 In this work, we will provide a corresponding fully discrete method for the Cahn-Hilliard-Navier-Stokes system using standard finite elements in space and suitable explicit and implicit time-stepping. The discretisation can be seen as a natural extension of the well-known convex-concave splitting for the isothermal Cahn-Hilliard-Navier-Stokes system to the non-isothermal case.

The work is structured as follows. In Section 2 we will introduce relevant notation, and derive a variational formulation of \changestwo{system} \eqref{eq:ac1}--\eqref{eq:ac3} which is suitable for a finite element approximation. Afterwards, we present the fully discrete method and present the main result, i.e. preservation of total energy conservation and entropy production. Section 3 illustrates the theoretical scheme with a suitable convergence test. In Section 4 we conclude the work and present an outlook for future research.
\vspace{-1em}
\section{Notation And Main Result}
\vspace{-1em}
Before we present \changestwo{our} discretization method and main results in detail, let us briefly introduce our notation and main assumptions, and recall some basic facts.


%
\textbf{Notation:} The system \eqref{eq:ac1} -- \eqref{eq:ac3} is investigated on a finite time interval $(0,T)$ and bounded domain $\Omega$. 
To avoid the discussion of boundary conditions, we consider a spatially periodic setting, i.e., $\Omega \subset \RR^d$, $d=2,3$ is a \changesone{square/cube} and identified with the $d$-dimensional torus $\mathcal{T}^d$. Moreover, functions on $\Omega$ are assumed to be periodic throughout the paper. 
We denote by $\langle \cdot, \cdot\rangle$ the scalar product on $L^2(\Omega)$, which is defined by
\begin{align*}
\la u, v \ra = \int_\Omega u \cdot v \quad \forall u,v \in L^2(\Omega) \quad\text{ with norm } \norm{u}_0^2:=\int_\Omega u^2.    
\end{align*}
We introduce the skew-symmetric formulation of $\mathbf{c}(\u,\vv,\mathbf{w}):=\la(\u\cdot\nabla)\vv,\mathbf{w} \ra$ via
\begin{equation*}
  \mathbf{c}_{skw}(\u,\vv,\mathbf{w}) = \tfrac{1}{2}\mathbf{c}(\u,\vv,\mathbf{w}) - \tfrac{1}{2}\mathbf{c}(\u,\mathbf{w},\vv)  
\end{equation*}
with the relevant property that $\mathbf{c}_{skw}(\u,\vv,\vv)=0$ even if $\u$ is not divergence-free.

In the following proposition, we collect the necessary assumptions applied in the whole work.
\begin{proposition}\label{prop:A}
\changestwo{We require the following assumptions:}
\begin{enumerate}
    \item[] (A1) The interface parameter $\gamma$ is a positive constant. 
    \item[] (A2) The viscosity function $\eta\equiv\eta(\phi,\theta)$ is strictly positive. 
    \item[] (A3) The diffusion matrix $\LL\equiv \LL(\rho,\nabla\rho,\theta)\in\mathbb{R}^{2d \times 2d}$ given by
    $\begin{pmatrix}
        \LL_{11} & -\LL_{12} \\ - \LL_{12} & \LL_{22}
    \end{pmatrix}$ is symmetric and strictly positive definite. \changesone{Note that also $\LL_{12}$ is assumed to be symmetric.} 
    \item[] (A4) For the driving potential $\Psi(\cdot,\cdot):\RR\times\RR_+\to \RR$ we assume that for every fixed $\phi$ the potential $\Psi(\phi,\cdot):\RR_+\to\RR$ is concave and goes to infinity for $\theta\to 0$. Furthermore, for every fixed $\theta\in\RR_+$ the potential $\Psi(\cdot,\theta)$ can be decomposed in a strictly convex and a strictly concave part, denoted by $\Psi_{vex}, \Psi_{cav}.$ 
\end{enumerate}
\end{proposition}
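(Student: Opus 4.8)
Proposition~\ref{prop:A} records the standing structural hypotheses of the paper rather than a derived fact, so in the literal sense there is nothing to prove. What I would do in its place is to show that (A1)--(A4) are mutually consistent and non-vacuous, i.e.\ to exhibit one explicit, physically meaningful choice of the data $\gamma$, $\eta$, $\LL$, $\Psi$ and verify each item; the paper announces such a choice in Section~3, and the sketch below is how I would organise that verification while isolating which requirement carries the real content.

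Conditions (A1) and (A2) are immediate for constant $\gamma>0$ and $\eta>0$, and hold more generally for any $\eta(\phi,\theta)$ bounded below by a positive constant. For (A3) I would take $\LL$ in the prescribed block form, impose symmetry by construction, and obtain strict positive definiteness by a diagonal-dominance / Schur-complement argument: choose the diagonal blocks $\LL_{11},\LL_{22}$ bounded below (as multiples of the identity, possibly with a $\theta$-dependent mobility factor) and $\LL_{12}$ small enough in norm that the Schur complement stays uniformly positive. This reduces (A3) to one pointwise inequality on the chosen data.

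The genuinely delicate item is (A4), and that is where the effort goes. One must produce $\Psi(\phi,\theta)$ that is simultaneously (i) concave in $\theta$ for each fixed $\phi$, with $\Psi(\phi,\theta)\to\infty$ as $\theta\downarrow 0$, and (ii) for each fixed $\theta$ decomposable as $\Psi_{vex}(\cdot,\theta)+\Psi_{cav}(\cdot,\theta)$ into strictly convex and strictly concave parts. A convenient ansatz, of the kind presented in Section~3, is additive: $\Psi(\phi,\theta)=a(\theta)\,W(\phi)+b(\phi)+c(\theta)$ with $W(\phi)=\tfrac{1}{4}(\phi^{2}-1)^{2}$ the Ginzburg--Landau double well and $c(\theta)$ an entropic/logarithmic term. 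The standard decomposition $W=W_{vex}+W_{cav}$ together with positivity of $a(\theta)$ gives (ii); concavity of $c$ and a piece blowing up as $\theta\downarrow 0$ (for example $-\log\theta$ up to affine terms) gives the $\theta$-side of (i); and one only needs $a(\theta)$ concave and bounded so that $a(\theta)W(\phi)$ does not destroy concavity in $\theta$. I would then check on this explicit $\Psi$ that $e=\partial_\theta\tilde\Psi$ and $s=\theta e-\tilde\Psi$ are well defined and behave as the thermodynamic interpretation requires — a short direct computation.

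The main obstacle is the tension inside (A4): $\Psi$ must at once support the thermodynamic structure (concavity in $\theta$, hence a well-posed internal energy and the entropy inequality) and the algebraic convex--concave time-stepping (the strictly convex/concave splitting in $\phi$), and a naive coupling term between $\phi$ and $\theta$ can enforce one property while spoiling the other. The additive ansatz is the cleanest way to satisfy both without interference, so the crux is really the verification that such a separately well-behaved $\Psi$ covers the models of interest.
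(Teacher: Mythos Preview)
Your reading is correct: Proposition~\ref{prop:A} is a collection of standing hypotheses, and the paper offers no proof for it---it simply states the assumptions and later, in Section~3, instantiates them with a concrete choice of $\gamma,\eta,\LL,\Psi$ without a line-by-line verification. Your consistency check via an additive ansatz is therefore already more than the paper provides; one small caveat is that in the paper's own example the coupling coefficient $a(\theta)=2\theta-1$ is linear (hence concave) but not positive for all $\theta\in\RR_+$, so the convex--concave roles in your argument for (ii) may swap with the sign of $a(\theta)$, which is harmless for the splitting but worth noting.
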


\noindent\textbf{Variational Formulation:}
In this paragraph, we will recast the system into a variational form which is directly usable for a conforming finite element discretisation. We will see that the correct formulation of the Korteweg stress $\bsig$ will be crucial. In the isothermal case it is well-known that a reformulation of $\bsig$ and the pressure $p$ in terms of $\phi,\mu,\pi$, where $\pi$ is a modified pressure, allows us to deduce such variational formulations and we will provide a non-isothermal expansion which reveals the strong coupling of the temperature.

\changestwo{To derive a suitable variational formulation, we assume that a sufficiently regular solution of system \eqref{eq:ac1}--\eqref{eq:ac3} exists at least on a short time interval. Let us fix ideas by assuming a classical solution.}
\begin{definition}\label{def:classical}
We call a solution $(\phi,\theta,u,p)$ of \eqref{eq:ac1}--\eqref{eq:ac3} classical solution, if \eqref{eq:ac1}--\eqref{eq:ac3} holds pointwise for every $(t,x)\in(0,T]\times\Omega$ and the following regularity assumption holds
\begin{align*}
 \phi&\in C^1([0,T],C^4(\bar\Omega)),& \mu&\in C^0([0,T],C^2(\bar\Omega)),\\
 \theta&\in C^1([0,T],C^2(\bar\Omega)),& \u&\in C^1([0,T],C^2(\bar\Omega)),\\
 p&\in C^0([0,T],C^4(\bar\Omega)).  
\end{align*}
\end{definition}

\changestwo{ We note that such solutions might not exist at all or only exist under suitable assumptions for small times.  If the following variational formulation is suitable to prove the existence of generalized solutions is so far open and a topic of future research.}

\begin{lemma}
   Every classical solution \changesone{$(\phi,\mu,\theta,\u,p)$} of the system \eqref{eq:ac1}--\eqref{eq:ac3} fulfills the following variational formulation 
		\begin{align}
		\la \dt\phi,\psi \ra &- \la \phi\u,\nabla\psi\ra + \la \LL_{11}\nabla\mu - \LL_{12}\nabla\theta,\nabla\psi \ra = 0, \label{eq:weak1}\\
		\la \mu,\xi \ra &- \gamma\la \nabla\phi,\nabla\xi \ra - \la \partial_\phi \Psi,\xi\ra = 0, \label{eq:weak2}\\
		\la \dt e,w \ra &+  \la \LL_{12}\nabla\mu - \LL_{22}\nabla\theta, \nabla w\ra  - \la \eta\snorm{\Du}^2, w\ra  \label{eq:weak3}\\
  &- \la \bsig\u,\nabla w \ra- \la \tfrac{\phi}{\theta}\nabla\mu- \boldsymbol{\sigma}\tfrac{\nabla\theta}{\theta},\u w\ra - \la s+\phi\mu,\u\cdot\nabla\tfrac{w}{\theta} \ra  = 0, \notag\\ 
		\la \dt\u,\vv \ra &+ \mathbf{c}_{skw}(\u,\u,\vv) + \la \eta\Du,\mathrm{D}\vv \ra - \la \pi,\div(\vv) \ra \label{eq:weak4} \\
  &+\la \tfrac{\phi}{\theta}\nabla\mu +  (s+\phi\mu)\nabla\tfrac{1}{\theta}-\boldsymbol{\sigma}\tfrac{\nabla\theta}{\theta},\vv \ra = 0, \notag \\
		0 &= \la \div(\u), q \ra \label{eq:weak5}
	\end{align}
		for  $\psi,\xi,w,\vv,q\in C^1(\bar\Omega)$ and  $\pi := p + e - \frac{s+\phi\mu}{\theta}$.
	\end{lemma}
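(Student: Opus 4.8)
The plan is to verify the variational identities \eqref{eq:weak1}--\eqref{eq:weak5} one by one, by testing the strong equations \eqref{eq:ac1}--\eqref{eq:ac3} against the respective test functions and integrating by parts, using the periodicity of all functions (so no boundary terms appear). Identities \eqref{eq:weak1}, \eqref{eq:weak2} and \eqref{eq:weak5} are essentially immediate: multiply the phase-field equation \eqref{eq:ac1} by $\psi$, move the divergence onto $\nabla\psi$, and rewrite $\la \u\cdot\nabla\phi,\psi\ra = -\la\phi\u,\nabla\psi\ra$ using $\div\u=0$; multiply the definition of $\mu$ by $\xi$ and integrate the Laplacian by parts; and \eqref{eq:weak5} is just the incompressibility constraint tested by $q$. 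The regularity in Definition~\ref{def:classical} guarantees all integrations by parts are legitimate.

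The substantive work is in \eqref{eq:weak3} and \eqref{eq:weak4}, where the reformulation of the Korteweg stress and the pressure enters. The key preliminary step is an algebraic/calculus identity rewriting the divergence of the Korteweg stress $\bsig = \frac{\gamma}{\theta}\nabla\phi\otimes\nabla\phi$ in ``variational'' form. Using $\mu = -\gamma\Delta\phi + \partial_\phi\Psi$ one computes, for the momentum equation, that $-\div\bsig = \frac{\phi}{\theta}\nabla\mu + (s+\phi\mu)\nabla\frac1\theta - \bsig\frac{\nabla\theta}{\theta} + \nabla(\text{something})$, where the gradient term is absorbed into the modified pressure $\pi = p + e - \frac{s+\phi\mu}{\theta}$; this is exactly the non-isothermal analogue of the classical isothermal identity $\div(\nabla\phi\otimes\nabla\phi) = \Delta\phi\nabla\phi + \nabla(\tfrac12|\nabla\phi|^2)$ combined with $\mu\nabla\phi = -\gamma\Delta\phi\nabla\phi + \nabla\Psi - \partial_\theta\Psi\nabla\theta$ and the thermodynamic relations $e = \partial_\theta\tilde\Psi$, $s = \theta e - \tilde\Psi$. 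I would carry this identity out carefully first, since everything else hinges on it. Then \eqref{eq:weak4} follows by testing \eqref{eq:ac3} with $\vv$, writing the convective term in skew-symmetric form $\mathbf{c}_{skw}$ (legitimate because the extra term vanishes by $\div\u=0$ after integration by parts), integrating $\div(\eta\Du - p\mathbf{I})$ by parts to produce $\la\eta\Du,\mathrm{D}\vv\ra - \la\pi,\div\vv\ra$ plus the Korteweg contribution in the claimed form.

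For the internal-energy identity \eqref{eq:weak3} I would test \eqref{eq:ac2} with $w$, integrate the flux term $\div(\LL_{12}\nabla\mu - \LL_{22}\nabla\theta)$ by parts, and handle the convective term $\la\u\cdot\nabla e, w\ra$ together with the viscous/Korteweg source $(\eta\Du - \bsig):\nabla\u$. The point is to re-express $\u\cdot\nabla e$ using $e = s/\theta$-type relations and the identity $\nabla e = \ldots$ so that the terms reorganise into $-\la\bsig\u,\nabla w\ra$, the ``production'' term $-\la\frac{\phi}{\theta}\nabla\mu - \bsig\frac{\nabla\theta}{\theta}, \u w\ra$, and $-\la s+\phi\mu, \u\cdot\nabla\frac{w}{\theta}\ra$; here one uses the same Korteweg identity from the momentum step, now paired with $\u w$ rather than $\vv$, and the chain rule $\u\cdot\nabla\frac{w}{\theta} = \frac1\theta\u\cdot\nabla w - \frac{w}{\theta^2}\u\cdot\nabla\theta$. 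I expect the main obstacle to be exactly this bookkeeping: tracking how the convective term $\u\cdot\nabla e$, the Korteweg work $\bsig:\nabla\u$, and the gradient term absorbed into $\pi$ conspire to give the stated split, and confirming that all non-variational (second-derivative) terms either cancel or are moved onto the test function. Once the Korteweg identity is established and the thermodynamic relations $e = \partial_\theta\tilde\Psi$, $s = \theta e - \tilde\Psi$, $\pi = p + e - \frac{s+\phi\mu}{\theta}$ are used consistently, the remaining steps are routine integrations by parts justified by Definition~\ref{def:classical}.
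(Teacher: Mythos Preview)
Your plan is correct and follows essentially the same route as the paper: the standard identities \eqref{eq:weak1}, \eqref{eq:weak2}, \eqref{eq:weak5} are immediate, and the real work is the Korteweg divergence identity $\div\bsig = \tfrac{\phi}{\theta}\nabla\mu + (s+\phi\mu)\nabla\tfrac{1}{\theta} - \bsig\tfrac{\nabla\theta}{\theta} + \nabla e - \nabla\tfrac{s+\phi\mu}{\theta}$ (watch the sign---it is $\div\bsig$, not $-\div\bsig$, that carries these terms), which is then used in both \eqref{eq:weak4} and \eqref{eq:weak3}. For the energy equation the paper's bookkeeping is slightly more direct than what you sketch: rather than separately expanding $\nabla e$, one simply integrates $\la\bsig:\nabla\u,w\ra = -\la\bsig\u,\nabla w\ra - \la\div(\bsig)\cdot\u,w\ra$ by parts and inserts the Korteweg identity, after which the $\la\u\cdot\nabla e,w\ra$ piece coming from $\div\bsig$ exactly cancels the convective term in \eqref{eq:ac2}.
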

\begin{proof}
    The main part of the procedure is standard, i.e. multiplication by test functions, integration by parts. The Korteweg stress can be expanded as follows
    \begin{align*}
			\div(\bsig)=\div(\tfrac{\gamma}{\theta}\nabla\phi\otimes\nabla\phi) &= \tfrac{\gamma}{\theta}(\Delta\phi\nabla\phi + \nabla\tfrac{1}{2}\snorm{\nabla\phi}^2) + \nabla\phi\otimes\nabla\phi\cdot\nabla\tfrac{\gamma}{\theta} \\
			&= -\tfrac{\mu}{\theta}\nabla\phi + \theta\bsig\nabla\tfrac{1}{\theta} + \tfrac{1}{\theta}(\nabla\tfrac{\gamma}{2}\snorm{\nabla\phi}^2 + \tfrac{\partial\Psi}{\partial\phi}\nabla\phi) , \\
			&= -\tfrac{\mu}{\theta}\nabla\phi + \theta\bsig\nabla\tfrac{1}{\theta} + \tfrac{1}{\theta}(\nabla\tfrac{\gamma}{2}\snorm{\nabla\phi}^2 + \nabla\Psi - e\nabla\theta) \\
			&= -\tfrac{\mu}{\theta}\nabla\phi +  \theta\bsig\nabla\tfrac{1}{\theta}  - \tfrac{1}{\theta}\nabla s + \tfrac{1}{\theta}\theta\nabla e\\
			&= -\tfrac{\mu}{\theta}\nabla\phi + (\theta\bsig + s)\nabla\tfrac{1}{\theta} - \nabla\tfrac{s}{\theta}+ \nabla e \\
            &=  \tfrac{\phi}{\theta}\nabla\mu + \nabla\tfrac{1}{\theta}(s+\phi\mu+\theta\bsig ) - \nabla(\tfrac{s+\phi\mu}{\theta})+ \nabla e. 
		\end{align*}
The Korteweg stress appears in the internal energy equation as well as the Navier-Stokes equation. To derive the weak form \eqref{eq:weak1}-\eqref{eq:weak5} we compute
\begin{align*}
   \la \bsig:\nabla\u,w \ra &= -\la \bsig\u,\nabla w \ra - \la \div(\bsig)\u,w \ra \\
   &= -\la \bsig\u,\nabla w \ra - \la \tfrac{\phi}{\theta}\nabla\mu - \bsig\tfrac{\nabla\theta}{\theta},\u w\ra - \la s+\phi\mu, \u\cdot\nabla\tfrac{w}{\theta}\ra - \la \u\cdot\nabla e,w \ra, \\
   \la \div(\bsig),\vv \ra &= \la \tfrac{\phi}{\theta}\nabla\mu + \nabla\tfrac{1}{\theta}(s+\phi\mu) - \bsig\tfrac{\nabla\theta}{\theta} , \vv\ra - \la e-\tfrac{s+\phi\mu}{\theta}, \div(\vv)\ra.
\end{align*}

Note that after insertion the convective term $\la \u\cdot\nabla e,w \ra$ cancels out.
\end{proof}

\changesone{Note that due to the special structure of the Navier-Stokes equation, we can redefine the pressure i.e. switching from $p$ to $\pi$ as the main variable. For smooth solutions, $p$ and $\pi$ are both regular.}
This variational formulation allows us to deduce the thermodynamics quantities immediately by inserting suitable test functions.
\begin{lemma}\label{lem:cont}
For a classical solution $(\phi,\mu,\theta,\u,\pi)$ of \eqref{eq:ac1} -- \eqref{eq:ac3}, cf. Definition \ref{def:classical}, the conservation of mass and total energy as well as entropy production holds, i.e.
\begin{align*}
 \la \dt\phi,1 \ra &= 0, \qquad \la \dt (\tfrac{1}{2}\snorm{\u}^2+e(\phi,\theta)), 1\ra = 0, \\
 \la \dt s(\phi,\theta),1 \ra &= \norm{\sqrt{\eta\theta}\Du}_0^2 + \la (
    \nabla\mu, \nabla \theta)^\top,\LL (
    \nabla\mu, \nabla \theta)^\top  \ra =: \mathcal{D}_{\theta,\LL}(\Du,\nabla\mu,\nabla\theta)\geq0. 
\end{align*}
\end{lemma}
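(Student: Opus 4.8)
The plan is to derive the three identities directly from the variational system \eqref{eq:weak1}--\eqref{eq:weak5} by inserting, in each case, a single well-chosen test function and then collecting the cancellations that are already engineered into that formulation. Mass conservation is immediate: taking $\psi=1$ in \eqref{eq:weak1} annihilates both the convective term $\la\phi\u,\na\psi\ra$ and the diffusive term, leaving $\la\dt\phi,1\ra=0$.

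For the total energy I would first test \eqref{eq:weak4} with $\vv=\u$. The convective term drops since $\mathbf{c}_{skw}(\u,\u,\u)=0$, the pressure term drops since $\div\u=0$, and $\la\dt\u,\u\ra=\la\dt\tfrac12\snorm{\u}^2,1\ra$, producing a kinetic-energy balance whose remaining contributions are $\la\eta\Du,\Du\ra$ and the thermal forcing $\la\tfrac\phi\theta\na\mu+(s+\phi\mu)\na\tfrac1\theta-\bsig\tfrac{\na\theta}\theta,\u\ra$. Next I would test \eqref{eq:weak3} with $w=1$, which kills every $\na w$-term and yields $\la\dt e,1\ra-\la\eta\snorm{\Du}^2,1\ra-\la\tfrac\phi\theta\na\mu-\bsig\tfrac{\na\theta}\theta,\u\ra-\la s+\phi\mu,\u\cdot\na\tfrac1\theta\ra=0$. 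Adding the two balances and using $\la(s+\phi\mu)\na\tfrac1\theta,\u\ra=\la s+\phi\mu,\u\cdot\na\tfrac1\theta\ra$, the viscous, the $\tfrac\phi\theta\na\mu$, the $\bsig$ and the $(s+\phi\mu)$ terms cancel pairwise, leaving $\la\dt(\tfrac12\snorm{\u}^2+e),1\ra=0$.

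For the entropy the first step is a thermodynamic chain-rule computation: from $e=\partial_\theta\tilde\Psi$, $s=\theta e-\tilde\Psi$, $\tilde\Psi=\Psi+\tfrac\gamma2\snorm{\na\phi}^2$ and $\mu=\partial_\phi\Psi-\gamma\Delta\phi$ one finds that the $\dt\theta$-terms in $\dt(\theta e-\tilde\Psi)$ cancel and, after integrating $\gamma\na\phi\cdot\na\dt\phi$ by parts, $\la\dt s,1\ra=\la\theta,\dt e\ra-\la\mu,\dt\phi\ra$. I would then test \eqref{eq:weak3} with $w=\theta$: the last term vanishes because $\na\tfrac\theta\theta=0$, the $\bsig$-contributions cancel using the symmetry of $\bsig$, the factor $\theta$ cancels inside $\la\tfrac\phi\theta\na\mu-\bsig\tfrac{\na\theta}\theta,\u\theta\ra$, and one obtains $\la\dt e,\theta\ra=\la\eta\snorm{\Du}^2,\theta\ra-\la\LL_{12}\na\mu-\LL_{22}\na\theta,\na\theta\ra+\la\phi\na\mu,\u\ra$. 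Testing \eqref{eq:weak1} with $\psi=\mu$ gives $\la\dt\phi,\mu\ra=\la\phi\u,\na\mu\ra-\la\LL_{11}\na\mu-\LL_{12}\na\theta,\na\mu\ra$. Subtracting the two, the advective terms $\la\phi\na\mu,\u\ra$ cancel, the four $\LL$-terms recombine---here the assumed symmetry of $\LL_{12}$ is used---into the quadratic form $\la(\na\mu,\na\theta)^\top,\LL(\na\mu,\na\theta)^\top\ra$, and $\la\eta\snorm{\Du}^2,\theta\ra=\norm{\sqrt{\eta\theta}\Du}_0^2$; nonnegativity then follows from (A2), (A3) and $\theta>0$, which identifies the right-hand side with $\D_{\theta,\LL}(\Du,\na\mu,\na\theta)$.

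I expect the only non-mechanical step to be the identity $\la\dt s,1\ra=\la\theta,\dt e\ra-\la\mu,\dt\phi\ra$, which relies on the constitutive relations and on $\tilde\Psi$ carrying the $\na\phi$-dependence; everything else is careful bookkeeping, the decisive structural point being that the Korteweg stress, the pressure and the convective terms were written in \eqref{eq:weak1}--\eqref{eq:weak5} precisely so that they cancel in pairs under exactly these test-function choices.
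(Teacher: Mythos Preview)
Your proof is correct and follows essentially the same route as the paper: the same test functions $\psi=1$ for mass, $(\vv,w)=(\u,1)$ for total energy, and $(w,\psi)=(\theta,\mu)$ for entropy, with the identity $\la\dt s,1\ra=\la\theta,\dt e\ra-\la\mu,\dt\phi\ra$ as the bridge. The only cosmetic differences are that the paper invokes $q=\pi$ in \eqref{eq:weak5} rather than $\div\u=0$ directly, and obtains $\la\mu,\dt\phi\ra$ by testing \eqref{eq:weak2} with $\xi=\dt\phi$ rather than integrating $\gamma\nabla\phi\cdot\nabla\dt\phi$ by parts; for classical solutions these are equivalent.
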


\begin{proof}
Conservation of mass follows immediately by inserting $\psi=1$. Conservation of total energy is obtain by inserting $w=1$, $\vv=\u,$ $q=\pi$, using the skew-symmetry $\mathbf{c}_{skw}(\u,\u,\u)=0$, i.e.
\begin{align*}
  \ddt&\la \tfrac{1}{2}\snorm{u}^2 + e,1 \ra = \la \dt\u,\u \ra + \la \dt e,1 \ra  \\
  &= -\mathbf{c}_{skw}(\u,\u,\u) - \la \eta\Du,\mathrm{D}\u \ra + \la \pi,\div(\u) \ra \\
  &\quad-\la \tfrac{\phi}{\theta}\nabla\mu +  (s+\phi\mu)\nabla\tfrac{1}{\theta}-\boldsymbol{\sigma}\tfrac{\nabla\theta}{\theta},\u \ra \\
  &\quad+ \la \LL_{12}\nabla\mu - \LL_{22}\nabla\theta, \nabla 1\ra  + \la \eta\snorm{\Du}^2, 1\ra + \la \bsig\u,\nabla 1 \ra \\
  &\quad+ \la \tfrac{\phi}{\theta}\nabla\mu+ \boldsymbol{\sigma}\tfrac{\nabla\theta}{\theta},\u\ra+ \la s+\phi\mu,\u\cdot\nabla\tfrac{1}{\theta} \ra  = 0.
\end{align*}

For the entropy production we insert $\psi=-\mu, \xi=-\dt\phi$ and $w=\theta$ and by cancellation we immediately find
\begin{align}
 \la \dt s,1 \ra & = \la \dt e,\theta \ra - \la \dt\phi, \partial_\phi \Psi \ra - \gamma\la \nabla\phi,\nabla\dt\phi \ra = \la \dt e,\theta \ra - \la \mu,\dt\phi \ra  \label{eq:entropycomp}\\
 &= - \la \phi\u,\nabla\mu\ra + \la \LL_{11}\nabla\mu - \LL_{12}\nabla\theta,\nabla\mu \ra -  \la \LL_{12}\nabla\mu + \LL_{22}\nabla\theta, \nabla \theta\ra \notag   \\
 &+ \la \eta\snorm{\Du}^2, \theta\ra+ \la \bsig\u,\nabla \theta \ra+ \la \tfrac{\phi}{\theta}\nabla\mu- \boldsymbol{\sigma}\tfrac{\nabla\theta}{\theta},\u \theta\ra + \la s+\phi\mu,\u\cdot\nabla 1 \ra \notag\\
 &=  \la \LL_{11}\nabla\mu - \LL_{12}\nabla\theta,\nabla \mu \ra - \la \LL_{12}\nabla\mu-\LL_{22}\nabla\theta, \nabla\theta\ra + \la \eta\snorm{\Du}^2,\theta\ra  \notag\\
& = \norm{\sqrt{\eta\theta}\Du}_0^2 + \la (
    \nabla\mu, \nabla \theta)^\top,\LL (
    \nabla\mu, \nabla \theta)^\top  \ra =: \mathcal{D}_{\theta,\LL}(\Du,\nabla\mu,\nabla\theta)  \geq0. \notag
\end{align}
\end{proof}

\noindent\textbf{Time Discretization:}
We partition the time interval $[0,T]$ \changesone{into uniform} \changestwo{sub-intervals} with step size $\tau>0$ and introduce $\Itau:=\{t^0=0,t^1=\tau,\ldots, t^{n_T}=T\}$, where $n_T=\tfrac{T}{\tau}$ is the absolute number of time steps. We denote by $\Pi^1_c(\Itau;X), \Pi^0(\Itau;X)$ the spaces of continuous piecewise linear and piecewise constant functions on $\Itau$ with values in the space or set $X$. By $g^{n+1},g^n,g^{n+1/2}$ we denote the new, the old time level and the midpoint approximation of $g$, i.e. $(g^{n+1}+g^n)/2$. We introduce the time difference and the discrete time derivative via
\begin{equation*}
    d^{n+1}g = g^{n+1} - g^n, \qquad d^{n+1}_\tau g = \tau^{-1}(g^{n+1}-g^n)=\tau^{-1}d^{n+1}g.
\end{equation*} 
\textbf{Space Discretization:} For \changestwo{the spatial} discretisation we require that $\Th$ is a geometrically conforming partition of $\Omega$ into simplices that can be extended periodically to periodic extensions of $\Omega$. We denote the space of continuous, piecewise linear and quadratic functions over $\Th$ as well as the space of mean free and set of positive piecewise linear functions over $\Th$ via
\begin{align*}
    \Vh &:= \{v \in H^1(\Omega)\cap C^0(\bar\Omega) : v|_K \in P_1(K) \quad \forall K \in \Th\},\\
    \Xh^d &:= \{v \in H^1(\Omega)^d\cap C^0(\bar\Omega)^d : v|_K \in P_2(K)^d \quad \forall K \in \Th\}, \\
    \Qh &:= \{v \in \Vh : \la v, 1\ra=0\},\qquad\Vh^+ := \{v \in \Vh : v(x) > 0,\; \forall x\in\Omega\}.
\end{align*}

We introduce the abbreviation for the convex-concave splitting, \changesone{recall assumption (A4), cf. Proposition \ref{prop:A}} by
\begin{equation*}
 \changesone{\Psi_{sp}}(\phi_h^{n+1},\phi_h^n,\theta_h^{n+1}) := \Psi_{vex}(\phi_h^{n+1},\theta_h^{n+1}) + \Psi_{cav}(\phi_h^{n},\theta_h^{n+1})   
\end{equation*}
and we will abbreviate when suitable $e(\phi_h^{n+1},\theta_h^{n+1})=:e_h^{n+1}$ and similarly for $s,\Psi$.

 We then propose \changestwo{a} fully discrete time-stepping method for the CHNST system.
\begin{problem}\label{prob:ac}
Let \changesone{$(\phi_{h}^0,\u_{h}^0,\theta_{h}^0)\in \Vh\times\Xh^d\times \Vh^+$} be given. Find the functions $(\phi_h,\u_h,\theta_h)\in \Pi^1_c(\Itau;\Vh \times\Xh^d\times \Vh^+)$ and $(\mu_h,\pi_h)\in \Pi^0(\Itau;\Vh\times\Qh)$  such that
\begin{align*}
  \la d_\tau^{n+1}\phi_h,\psi_h \ra &- \la \phi_h^*\u_h^{n+1/2},\nabla\psi_h\ra + \la \LL_{11}^*\nabla\mu_h^{n+1} - \LL_{12}^*\nabla\theta_h^{n+1},\nabla\psi_h \ra = 0, \\
  \la \mu_h^{n+1},\xi_h \ra &- \gamma\la \nabla\phi_h^{n+1},\nabla\xi_h \ra - \la \partial_\phi \changesone{\Psi_{sp}}(\phi^{n+1}_h,\phi^{n}_h,\theta^{n+1}_h),\xi_h \ra = 0, \\
  \la d_\tau^{n+1}e_h,w_h \ra &  - \la \eta^*\snorm{\Du_h^{n+1/2}}^2, w_h\ra + \la \LL_{12}^*\nabla\mu_h^{n+1} - \LL_{22}^*\nabla\theta_h^{n+1}, \nabla w_h\ra\\
  & - \la \bsig_h^*\cdot\u_h^{n+1/2},\nabla w_h\ra-\la \tfrac{\phi_h^*}{\theta_h^{n+1}}\nabla\mu_h^{n+1} - \bsig_h^*\tfrac{\nabla\theta_h^{n+1}}{\theta_h^{n+1}},\u_h^{n+1/2} w_h \ra \\
  &- \la (s_h^*+\phi_h^*\mu_h^*)\u_h^{n+1/2} ,\tfrac{\theta_h^{n+1}\nabla w_h-w_h\nabla\theta_h^{n+1}}{(\theta_h^*)^2} \ra=0, \\
   \la d_\tau^{n+1}\u_h,\vv_h \ra &+ \mathbf{c}_{skw}(\u_h^{*},\u_h^{n+1/2},\vv_h) + \la \eta^*\Du_h^{n+1/2},\mathrm{D}\vv_h \ra - \la \pi_h^{n+1},\div(\vv_h)\ra \\
  & +\la \tfrac{\phi_h^*}{\theta_h^{n+1}}\nabla\mu_h^{n+1} - \bsig_h^*\tfrac{\nabla\theta_h^{n+1}}{\theta_h^{n+1}} - (s_h^*+\phi_h^*\mu^*_h)\tfrac{\nabla\theta_h^{n+1}}{(\theta_h^*)^2},\vv_h \ra=0, \\
 0 &=  \la \div(\u_h^{n+1/2}),q_h \ra
\end{align*}
holds for $(\psi_h,\xi_h,w_h,\vv_h,q_h)\in\Vh\times\Vh\times\Vh^+\times \Xh^d\times\Qh$. Here $g^*$ denotes an evaluation of $g$ at any $t\in\{t^n,t^{n+1}\}$, but all terms have to be evaluated at the same point in time. 
\end{problem}

\begin{theorem}
For any solution $(\phi_h,\mu_h,\u_h,\pi_h,\theta_h)$ of Problem \ref{prob:ac} discrete mass and total energy conservation as well as entropy production holds, i.e.
\begin{align*}
&\la \phi^{n+1}_h -\phi^0_h,1 \ra = 0 ,\qquad \la \tfrac{1}{2}\snorm{\u^{n+1}_h}^2 + e(\phi_h^{n+1},\theta_h^{n+1}) -  \tfrac{1}{2}\snorm{\u^{0}_h}^2 - e(\phi_h^{0},\theta_h^{0}),1 \ra =  0, \\
&\la  s(\phi_h^{n+1},\theta_h^{n+1}) -  s(\phi_h^{0},\theta_h^{0}),1\ra = \tau\sum_{k=0}^{n_T}\mathcal{D}_{\theta^{n+1}_h,\LL^*}(\Du^{n+1}_h,\mu_n^{n+1},\theta_h^{n+1}) + \sum_{k=0}^{n_T}\mathcal{D}_{num}^k,
\end{align*}
where the numerical dissipation satisfies $\mathcal{D}_{num}^n\geq0$ and is given by
\begin{align*}
    \mathcal{D}_{num}^n &= \tfrac{\gamma}{2}\norm{\nabla d^{n+1}\phi_h}^2 - \partial_{\theta\theta}\changesone{\Psi}(\phi^n_h,\xi^3_h)(d^{n+1}\theta_h)^2 \\
    &+ (\partial_{\phi\phi}\changesone{\Psi}_{vex}(\xi^1_h,\theta^{n+1}_h) - \partial_{\phi\phi}\changesone{\Psi}_{cav}(\xi^2_h,\theta^{n+1}_h))(d^{n+1}\phi_h)^2. 
\end{align*}
\changesone{Here $\xi^1_h,\xi^2_h$ are convex combinations of $\phi_h^{n+1},\phi_h^n$ and $\xi^3_h$ is a convex combination of $\theta_h^{n+1},\theta_h^n$.}
\end{theorem}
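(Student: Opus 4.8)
The plan is to reproduce, at the fully discrete level, the computation in the proof of Lemma~\ref{lem:cont}: one tests the four discrete equations with the discrete analogues of the multipliers used there, replaces $\dt$ by the difference quotients $d_\tau^{n+1},d^{n+1}$, and keeps track of the Taylor remainders produced because the discrete chain rule holds only approximately; these remainders will constitute $\mathcal{D}_{num}^n$.

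\textbf{Mass and total energy.} First, take $\psi_h=1$ in the discrete phase equation; since $\nabla 1=0$ the convective and mobility terms drop out, giving $\la d_\tau^{n+1}\phi_h,1\ra=0$, which telescopes to $\la\phi_h^{n+1}-\phi_h^0,1\ra=0$. For the total energy I would test the discrete energy balance with $w_h=1$ (admissible, $1\in\Vh^+$), the momentum balance with $\vv_h=\u_h^{n+1/2}$, and the incompressibility constraint with $q_h=\pi_h^{n+1}$. Then $\mathbf{c}_{skw}(\u_h^{*},\u_h^{n+1/2},\u_h^{n+1/2})=0$ removes the convection, $\la\pi_h^{n+1},\div(\u_h^{n+1/2})\ra=0$ removes the pressure, the symmetric-gradient term cancels $-\la\eta^{*}\snorm{\Du_h^{n+1/2}}^2,1\ra$, and—because $w_h=1$ kills every $\nabla w_h$-term—the three coupling contributions built from $\tfrac{\phi_h^{*}}{\theta_h^{n+1}}\nabla\mu_h^{n+1}$, $\bsig_h^{*}\tfrac{\nabla\theta_h^{n+1}}{\theta_h^{n+1}}$ and $(s_h^{*}+\phi_h^{*}\mu_h^{*})\tfrac{\nabla\theta_h^{n+1}}{(\theta_h^{*})^2}$ appear in the energy and momentum balances with opposite signs and cancel pairwise. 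What remains is $\la d_\tau^{n+1}e_h,1\ra+\la d_\tau^{n+1}\u_h,\u_h^{n+1/2}\ra=0$; combined with $\la d_\tau^{n+1}\u_h,\u_h^{n+1/2}\ra=\tfrac12\la d_\tau^{n+1}\snorm{\u_h}^2,1\ra$ and summation over the time levels, this gives the stated energy identity.

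\textbf{Entropy production.} Mirroring \eqref{eq:entropycomp}, I would test the phase equation with $\psi_h=-\mu_h^{n+1}$, the chemical-potential equation with $\xi_h=-d^{n+1}\phi_h$, and—the crucial choice—the energy balance with $w_h=\theta_h^{n+1}\in\Vh^+$. For that test function the last transport term of the energy balance vanishes identically (its bracket is $\theta_h^{n+1}\nabla\theta_h^{n+1}-\theta_h^{n+1}\nabla\theta_h^{n+1}=0$), the two $\bsig_h^{*}$-terms cancel by symmetry of $\bsig_h^{*}$, the weight $\theta_h^{n+1}$ clears the denominator $\theta_h^{n+1}$, and the surviving term $-\la\phi_h^{*}\u_h^{n+1/2},\nabla\mu_h^{n+1}\ra$ is cancelled against the corresponding term from the phase equation. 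Eliminating $\la\mu_h^{n+1},d^{n+1}\phi_h\ra$ via the chemical-potential equation, the mobility and viscous parts reassemble into $\tau\,\mathcal{D}_{\theta_h^{n+1},\LL^{*}}(\Du_h^{n+1/2},\nabla\mu_h^{n+1},\nabla\theta_h^{n+1})\ge0$, nonnegative by (A2)--(A3) and $\theta_h^{n+1}>0$. It then remains to convert $\la s_h^{n+1}-s_h^{n},1\ra$ into $\tau\,\mathcal{D}_{\theta_h^{n+1},\LL^{*}}+\mathcal{D}_{num}^n$: I would use $s=\theta e-\Psi-\tfrac{\gamma}{2}\snorm{\nabla\phi}^2$ with the split $\theta_h^{n+1}e_h^{n+1}-\theta_h^{n}e_h^{n}=\theta_h^{n+1}d^{n+1}e_h+e_h^{n}d^{n+1}\theta_h$ and $\partial_\theta\Psi=e$; a second-order Taylor expansion of $\Psi(\phi_h^{n},\cdot)$ about $\theta_h^{n}$ (concave by (A4), producing the intermediate point $\xi_h^3$ and a nonnegative $-\partial_{\theta\theta}\Psi$-term); the convexity/concavity estimates for $\Psi_{vex}(\cdot,\theta_h^{n+1})$ and $\Psi_{cav}(\cdot,\theta_h^{n+1})$ (strictly convex resp. strictly concave in $\phi$ by (A4), producing $\xi_h^1,\xi_h^2$ and a nonnegative $\partial_{\phi\phi}\Psi_{vex}-\partial_{\phi\phi}\Psi_{cav}$-term); and the identity $\gamma\la\nabla\phi_h^{n+1},\nabla d^{n+1}\phi_h\ra=\tfrac{\gamma}{2}\la\snorm{\nabla\phi_h^{n+1}}^2-\snorm{\nabla\phi_h^{n}}^2,1\ra+\tfrac{\gamma}{2}\norm{\nabla d^{n+1}\phi_h}^2$, whose last summand is the remaining, trivially nonnegative, part of $\mathcal{D}_{num}^n$. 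The $e_h^{n}d^{n+1}\theta_h$-terms cancel, and inserting the dissipative identity just derived leaves exactly $\la s_h^{n+1}-s_h^{n},1\ra=\tau\,\mathcal{D}_{\theta_h^{n+1},\LL^{*}}+\mathcal{D}_{num}^n$ with $\mathcal{D}_{num}^n\ge0$; summation over the time levels proves the claim.

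\textbf{Main obstacle.} The multiplications and integrations by parts are routine; the genuinely delicate step is tracking the many Korteweg- and transport-type terms in the energy, momentum and phase equations and checking that, for exactly the designated test functions, the right combinations cancel across the system—this is precisely what the placement of the weights $\theta_h^{n+1}$ and $(\theta_h^{*})^2$ in Problem~\ref{prob:ac} is engineered for. Once those cancellations are in place, the nonnegativity of $\mathcal{D}_{num}^n$ and of $\mathcal{D}_{\theta_h^{n+1},\LL^{*}}$ is immediate from Proposition~\ref{prop:A}.
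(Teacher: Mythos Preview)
Your proposal is correct and follows essentially the same approach as the paper: the same test functions ($\psi_h=1$; $w_h=1,\ \vv_h=\u_h^{n+1/2},\ q_h=\pi_h^{n+1}$; $\psi_h=-\mu_h^{n+1},\ \xi_h=-d^{n+1}\phi_h,\ w_h=\theta_h^{n+1}$), the same split $\theta_h^{n+1}e_h^{n+1}-\theta_h^{n}e_h^{n}=\theta_h^{n+1}d^{n+1}e_h+e_h^{n}d^{n+1}\theta_h$, and the same Taylor-remainder bookkeeping via the insertion of $\pm\Psi(\phi_h^n,\theta_h^{n+1})$ to produce the $\xi_h^1,\xi_h^2,\xi_h^3$ intermediate points. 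The only cosmetic difference is the order of operations (you test first and then expand $s_h^{n+1}-s_h^n$, while the paper expands first and then tests), and you are slightly more explicit about why the individual Korteweg/transport terms cancel for $w_h=\theta_h^{n+1}$.
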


\begin{proof}
For total energy conservation we insert $\vv_h=\u_h^{n+1/2}, w_h=1$ together with cancellation and the skew-symmetric of $\mathbf{c}_{skew}(\u_h^*,\u_h^{n+1/2},\u_h^{n+1/2})=0$ we obtain
\begin{equation*}
\la \u_h^{n+1}\cdot\u_h^{n+1/2} - \u_h^{n+1/2}\cdot\u_h^n + e_h^{n+1}- e_h^n, 1\ra = 0.
\end{equation*}
Using the algebraic identity $a(a+b) - (a+b)b=a^2-b^2 $ we obtain after rearrangement
\begin{equation*}
    \la \tfrac{1}{2}\snorm{\u^{n+1}_h} + e_h^{n+1} -  \tfrac{1}{2}\snorm{\u^{n}_h}^2 -e_h^n, 1\ra = 0.
\end{equation*}

For the entropy production we compute
\begin{align*}
 \la s_h^{n+1}- s_h^n,1\ra =& \la \theta_h^{n+1}e_h^{n+1} - \theta_h^{n}e_h^n - \changesone{\Psi}_h^{n+1} + \changesone{\Psi}_h^n - \tfrac{\gamma}{2}\snorm{\nabla\phi_h^{n+1}}^2 + \tfrac{\gamma}{2}\snorm{\nabla\phi_h^{n}}^2,1\ra\\
 =&  \la d^{n+1}e_h,\theta_h^{n+1}\ra  - \gamma\la\nabla\phi_h^{n+1},\nabla d^{n+1}\phi_h\ra - \la\changesone{\Psi}_h^{n+1}-\changesone{\Psi}_h^n,1 \ra\\
 &+ \la e_h^n,d^{n+1}\theta_h\ra + \tfrac{\gamma}{2}\snorm{\nabla d^{n+1}\phi_h}^2. 
\end{align*}
Adding $\pm\partial_\phi\changesone{\Psi_{sp}}(\phi_h^{n+1},\phi_h^n,\theta_h^{n+1})d^{n+1}\phi_h$ and insertion of $\xi_h=d^{n+1}\phi_h$ \changestwo{yield}
\begin{align*}
\la s_h^{n+1}- s_h^n,1\ra=& \la d^{n+1} e_h,\theta_h^{n+1}\ra - \la\mu_h^{n+1},d^{n+1}\phi_h\ra - \la \changesone{\Psi}_h^{n+1} - \changesone{\Psi}_h^n - e_h^n d^{n+1}\theta_h,1\ra \\
&+ \la \partial_\phi\Psi(\phi_h^{n+1},\phi_h^n,\theta_h^{n+1}),d^{n+1}\phi_h\ra + \tfrac{\gamma}{2}\norm{\nabla d^{n+1}\phi_h}_0^2  \\
 = & \la d^{n+1} e_h, \theta_h^{n+1}\ra - \la \mu_h^{n+1},d^{n+1}\phi_h \ra + \mathcal{D}_{num}^n.
\end{align*}
Insert $\psi_h=-\tau\mu_h^{n+1}, w_h=\tau\theta_h^{n+1}$ into the discrete formulation \changestwo{we can mimic the same calculations as in the} continuous case for the entropy production equation, cf. proof of Lemma \ref{lem:cont}, cf. \eqref{eq:entropycomp}. \changestwo{Hence, we obtain that
\begin{align*}
\la d^{n+1} e_h, \theta_h^{n+1}\ra - \la \mu_h^{n+1},d^{n+1}\phi_h \ra =  \tau\mathcal{D}_{\theta^{n+1}_h,\LL^*}(\Du^{n+1}_h,\mu_n^{n+1},\theta_h^{n+1})   
\end{align*}}

Finally, for the numerical dissipation we add  $\pm \Psi_h(\phi_h^n,\theta_h^{n+1})$ which yields
\begin{align*}
 \mathcal{D}_{num}^n &= \int_\Omega -\Psi_h^{n+1} +\Psi_h(\phi_h^n,\theta_h^{n+1}) -  e_h^nd^{n+1} \theta_h + \tfrac{\gamma}{2}\snorm{\nabla d^{n+1}\phi_h}^2\\
& - \Psi_h(\phi_h^n,\theta_h^{n+1}) - \Psi_h^n  + \partial_\phi\Psi(\phi_h^{n+1},\phi_h^n,\theta_h^{n+1})d^{n+1}\phi_h \\
& = \int_\Omega \tfrac{\gamma}{2}\norm{\nabla d^{n+1}\phi_h^{n+1}}^2 - \partial_{\theta\theta}\changesone{\Psi}(\phi^n_h,\xi^3_h)(d^{n+1}\theta_h)^2 \\
    &+ (\partial_{\phi\phi}\changesone{\Psi}_{vex}(\xi^1_h,\theta^{n+1}_h) - \partial_{\phi\phi}\changesone{\Psi}_{cav}(\xi^2_h,\theta^{n+1}_h))(d^{n+1}\phi_h^{n+1})^2.
\end{align*}
 Using the structural assumptions on the potential $\Psi$, cf. (A4) in Proposition \ref{prop:A}, we see that $\mathcal{D}_{num}\geq 0$ follows directly.
\end{proof}

\vspace{-1em}
\section{Numerical Test}
\vspace{-1em}
For the convergence test, we set $\Omega=(0,1)^2$ which is identified with the two-torus $\mathbb{T}^2$. This accounts for the periodic boundary conditions. We set $g^*=g^n$ and solve the nonlinear system by the Newton method with tolerance $10^{-12}$ in the $L^2$-norm. 
We consider the initial data
\begin{align*}
 \phi_(x,y) &= 0.4 + 0.2\sin(2\pi x)\sin(2\pi y), \qquad \theta_0(x,y)= 1. + 0.2\sin(2\pi x)\sin(2\pi y)   \\
 \u_0(x,y) &= 10^{-2}(-\sin(\pi x)^2\sin(2\pi y),\sin(2\pi x)\sin(\pi y)^2)
\end{align*}
with the set of functionals and parameters
\begin{align*}
\changesone{\tilde\Psi(\phi,\theta)} &= \log(\theta) + (2\theta-1)\phi^2(1-\phi)^2 + \frac{\gamma}{2}\snorm{\nabla\phi}^2,\\
e &=\tfrac{1}{\theta} + 2\phi^2(1-\phi)^2, \qquad  s = 1-\log(\theta) + \phi^2(1-\phi)^2 - \tfrac{\gamma}{2}\snorm{\nabla\phi}^2, \\
\gamma&=10^{-3},\qquad \eta=10^{-3} + \tfrac{1}{40}(\phi+1)^2,\qquad \LL=10^{-2}\cdot \mathbf{I}.
\end{align*}

Since no analytical solution is available, the discretisation error is estimated by comparing the computed solutions $(\phi_{h,\tau},\mu_{h,\tau},\theta_{h,\tau},\u_{h,\tau},p_{h,\tau})$ with those computed on uniformly refined grids, here halving space and time discretisation parameter i.e. $(\phi_{h/2,\tau/2},\mu_{h/2,\tau/2},\theta_{h/2,\tau/2},\u_{h/2,\tau/2},p_{h/2,\tau/2})$.
The error quantities for the \changestwo{fully discrete} scheme are given in the \changestwo{energy norm}, i.e.
\begin{align*}
	\hspace{-1em}e_{h,\tau} &= \norm*{\phi_{h,\tau} - \phi_{h/2,\tau/2}}_{L^\infty(H^1)}^2 + \norm*{\theta_{h,\tau} - \theta_{h/2,\tau/2}}_{L^\infty(L^2)}^2 + \norm*{\u_{h,\tau} - \u_{h/2,\tau/2}}_{L^\infty(L^2)}^2 \\
	&\,+ \norm*{\mu_{h,\tau} - \mu_{h/2,\tau/2}}_{L^2(H^1)}^2+ \norm*{\theta_{h,\tau} - \theta_{h/2,\tau/2}}_{L^2(H^1)}^2 + \norm*{\u_{h,\tau} - \u_{h/2,\tau/2}}_{L^2(H^1)}^2
\end{align*}
as well as the separated errors, $e^\phi_{h,\tau},e^\mu_{h,\tau},e^{\nabla\theta}_{h,\tau},e^{\nabla\u}_{h,\tau}$, which denote the related single quantities in the above error norm. For the convergence test we choose the discretisation parameters $h_k=2^{-k},\tau_k=10^{-3}\cdot h_k$ for $k=0,\ldots,4$.

\vspace{-1em}
\begin{table}[htbp!]
	\centering
	\small
	\caption{ Errors and experimental orders of convergence for the CHNST system. \label{tab:rates_acnst}} 
	\begin{tabular}{c||c|c|c|c|c|c|c|c|c|c}
		$ k $ & $ e_{h,\tau} $  &  eoc & $e^\phi_{h,\tau}$ & eoc & $e^\mu_{h,\tau}$ & eoc & $e^{\nabla\theta}_{h,\tau}$ & eoc & $e^{\nabla\u}_{h,\tau}$ & eoc  \\
		\hline
		$ 0 $ & $1.42\cdot10^{-0}$ & --- & $1.19 \cdot 10^{-0}$  &   ---   & $1.03 \cdot 10^{-1}$  &   ---    & $9.34 \cdot 10^{-2}$  &   ---  & $1.03\cdot 10^{-2}$ &  --- \\
		$ 1 $ & $5.77\cdot10^{-1}$ & 1.29 &$4.74 \cdot 10^{-1}$  &   1.32  & $5.74 \cdot 10^{-2}$  &   0.85   & $3.84 \cdot 10^{-2}$  & 1.37   & $6.18\cdot 10^{-3}$ & 0.74 \\
		$ 2 $ & $1.59\cdot10^{-1}$ & 1.86 &$1.29 \cdot 10^{-1}$  &   1.88  & $1.69 \cdot 10^{-2}$  &   1.76   & $1.30 \cdot 10^{-2}$  & 1.56   & $4.64\cdot 10^{-4}$ & 3.74 \\
		$ 3 $ & $3.66\cdot10^{-2}$ & 2.12 &$2.88 \cdot 10^{-2}$  &   2.16  & $4.28 \cdot 10^{-3}$  &   1.98   & $3.47 \cdot 10^{-3}$  & 1.90   & $2.66\cdot 10^{-5}$ & 4.13 \\
	\end{tabular}
\end{table}
\vspace{-1em}
We observe in Table \ref{tab:rates_acnst} at least first order convergence, second order for squared norms, in all norms, except for the gradients of the velocity. Indeed, due to the piecewise quadratic elements and the Crank-Nicolson type approximation of the velocity, this can be expected if the coupling terms do not pollute the convergence rate too much. In general, we would however expect that only first order convergence is valid.

\section{Conclusion}

In this work, we have derived the fully discrete finite element scheme for the non-isothermal Cahn-Hilliard-Navier-Stokes system \eqref{eq:ac1} -- \eqref{eq:ac3}. To derive the scheme we have first formulated the continuous equations \changestwo{in} a suitable variational formulation, which allowed for a streamlined Galerkin approximation in space. The time discretisation utilises the usual convex-concave splitting for the Cahn-Hilliard component and the nonlinear implicit time discretisation for the internal energy equation. In future work, we will benchmark the scheme and consider the error analysis. Future extensions to a phase-field dependent density in the spirit of \cite{AGG} will also be considered.

\begin{acknowledgement}
Support by the German Science Foundation (DFG) via SPP~2256: 
 \textit{Variational Methods for Predicting Complex Phenomena in Engineering Structures and Materials} (project BR~7093/1-2) and via TRR~146: \textit{Multiscale Simulation Methods for Soft Matter Systems} (project C3) is gratefully acknowledged. The authors would like to thank D. Trautwein for carefully proofreading the manuscript. We thank 
 the anonymous reviewers for their insightful comments and suggestions.
\end{acknowledgement}

\bibliographystyle{unsrt}
\bibliography{lit.bib}

\end{document}